\newcommand{\R}{\mathbb{R}}
\newcommand{\dx}{\, \mathrm{d}x}
\newcommand{\dX}{\, \mathrm{d}X}
\newtheorem{example}{Example}
\newtheorem{remark}{Remark}
\title{Topological optimization of the evaluation of finite element matrices}
\author{Robert~C.~Kirby
\thanks
{Department of Computer Science, University of
Chicago, Chicago,
Illinois 60637-1581, USA;
This work was supported by the U.S. Department of Energy Early Career
Principal Investigator Program under award number DE-FG02-04ER25650.
}
\and Anders~Logg
\thanks{Toyota Technological Institute at Chicago,
 University Press Building, 1427 East 60th Street, Chicago, Illinois
 60637, USA.}
\and L.~Ridgway~Scott
\thanks{The Computation Institute and
Departments of Computer Science and Mathematics, University of
Chicago, Chicago,
Illinois 60637-1581, USA.}
\and Andy~R.~Terrel
\thanks{Department of Computer Science, University of
Chicago, Chicago,
Illinois 60637-1581, USA
}
}
\begin{document}

\maketitle

\begin{abstract}
We present a topological framework for finding low-flop algorithms for
evaluating element stiffness matrices associated with multilinear
forms for finite element methods posed over straight-sided affine
domains.  This framework relies on phrasing the computation on each
element as the contraction of each collection of reference element
tensors with an element-specific geometric tensor.  We then present a
new concept of {\em complexity-reducing relations} that serve as
distance relations between these reference element tensors.  This
notion sets up a graph-theoretic context in which we may find an
optimized algorithm by computing a minimum spanning tree. We present
experimental results for some common multilinear forms showing
significant reductions in operation count and also discuss some
efficient algorithms for building the graph we use for the
optimization.
\end{abstract}

\begin{keywords}
finite element, variational form, optimized algorithm, minimum
spanning tree
\end{keywords}

\begin{AMS}
65N30, 05C90, 68R10
\end{AMS}

\pagestyle{myheadings}
\thispagestyle{plain}
\markboth{R. C. KIRBY AND A. LOGG AND L. R. SCOTT AND
  A. R. TERREL}{OPTIMIZING EVALUATION OF FINITE
  ELEMENT MATRICES}

\section{Introduction}

Several ongoing projects have led to the development of tools for
automating important aspects of the finite element method, with the
potential for increasing code reliability and decreasing development
time. By developing libraries for existing languages or new
domain-specific languages, these software tools allow programmers to
define variational forms and other parts of a finite element method
with succinct, mathematical syntax.  Existing C++ libraries for finite
elements include DOLFIN~\cite{logg:www:01,logg:preprint:06},
Sundance~\cite{www:Sundance,Lon03} and deal.II~\cite{www:deal}.  These
rely on some combination of operator overloading and object
orientation to present a high-level syntax to the user.  Other
projects have defined new languages with their own grammar and syntax,
such as FreeFEM~\cite{www:FreeFEM}, GetDP~\cite{www:GetDP}, and
Analysa~\cite{www:Analysa}.  Somewhere between these two approaches is
the FEniCS Form Compiler, FFC~\cite{logg:submit:04,logg:www:04}
developed primarily by the second author.  This Python library
relies on operator overloading to define variational forms, but rather
than using on the Python interpreter to evaluate the forms, FFC
generates low-level code that can be compiled into other platforms such
as DOLFIN.

While these tools are effective at exploiting modern software
engineering to produce workable systems, we believe that additional
mathematical insight will lead to even more powerful codes with more
general approximating spaces and more powerful algorithms.  To give
one example, work by the first author~\cite{Kir04,Kir05} shows how the Ciarl\'et
definition of the finite element leads to a code for arbitrary order
elements of general type.  This code, FIAT, is used by FFC and is
currently being integrated with Sundance.  For further examples, we refer to
work by the first three authors and Knepley~\cite{logg:article:07,KirKne05}, which studies
how to efficiently (in the sense of operation count) evaluate local
stiffness matrices for finite element methods.  For a given variational form, for each element $e$, there exists a "geometric tensor" $G_e$.  Each entry of the $n \times n$ element stiffness matrix for $e$ may be written as the contraction of a specific reference element tensor (depending only on the form, the polynomial basis and the reference element) with $G_e$.
We saw that there are relations between many of the reference element tensors
(equality, colinearity, small Hamming distance) that may be exploited to find an
algorithm with significantly fewer floating point operations than standard techniques.

In ~\cite{logg:article:07}, we devised a crude algorithm that searched
for and exploited these dependencies, generating simple Python code
for evaluating the local stiffness matrix for the Laplacian on
triangles given the geometric tensor.  In this paper, we set the
optimization process in a more abstract graph/topological context.  In
Section~\ref{feform}, we express the calculation for some finite
element operators as tensor contractions.  These ideas are more fully
developed in~\cite{logg:submit:04}, where the first two authors show
how this tensor structure may be used to implement a compiler capable
of generating the reference tensors and hence code for building the
stiffness matrices for general multilinear forms on affine elements.  Further,
a generalization to non-affine mappings, including curvilinear elements, is suggested.
In Section~\ref{optimize}, we introduce the idea of {\em complexity-reducing
relations}, which are a kind of distance relations among the tensors
that serve to model the cost of complexity in computing the
contractions.  Using these ideas, we show how to derive an algorithm
for performing the computation that is optimal in a certain sense.  We
demonstrate the reduction in operation count for several finite
element operations in Section~\ref{experiment}.  After this, we show
how these efficient algorithms may be derived more efficiently through
the use of certain search procedures in Section~\ref{efficient}.  Finally, we state some
conclusions and remarks about ongoing work in Section~\ref{conc}.

\section{Finite element formulation}
\label{feform}

The finite element method is a general methodology for the
discretization of differential equations. A linear (or linearized)
differential equation for the unknown function $u$ is expressed in the
form of a canonical variational problem and the discrete approximation
$U$ of $u$ is sought as the solution of a discrete version of the
variational problem \cite{Cia76,BreSco94}: Find $U \in V$ such that
\begin{equation} \label{eq:varform}
  a(v, U) = L(v) \quad \forall v \in \hat{V},
\end{equation}
where $(\hat{V}, V)$ is a pair of suitable discrete (typically
piecewise polynomial) function spaces, $a : \hat{V} \times V
\rightarrow \R$ a bilinear form and $L : \hat{V} \rightarrow \R$
a linear form.

The variational problem (\ref{eq:varform}) corresponds to a linear
system $A \xi = b$ for the expansion coefficients $\xi \in \R^M$ of
the discrete function $U$ in a basis $\{\varphi_i\}_{i=1}^M$ for
$V$. If $\{\hat{\varphi}_i\}_{i=1}^M$ is the corresponding basis for
$\hat{V}$, the entries of $A$ and $b$ are given by $A_{ij} =
a(\hat{\varphi}_i, \varphi_j)$ and $b_i = L(\hat{\varphi_i})$
respectively. When we consider general multilinear forms
below, the multilinear form $a$ is represented by a \emph{tensor} $A$.

\subsection{Multilinear forms}

Let now $\{V_i\}_{i=1}^r$ be a given set of discrete function spaces defined
on a triangulation $\mathcal{T} = \{e\}$ of $\Omega \subset \R^d$.
We consider a general multilinear form $a$ defined on the product space
$V_1 \times V_2 \times \cdots \times V_r$:
\begin{equation}
  a : V_1 \times V_2 \times \cdots \times V_r \rightarrow \R.
\end{equation}
Typically, $r = 1$ (linear form) or $r = 2$ (bilinear form), but
forms of higher arity appear frequently in applications and include
variable coefficient diffusion and advection of momentum in the
incompressible Navier--Stokes equations.

Let
$\{\varphi_i^1\}_{i=1}^{M_1},
 \{\varphi_i^2\}_{i=1}^{M_2}, \ldots,
 \{\varphi_i^r\}_{i=1}^{M_r}$
be bases of $V_1, V_2, \ldots, V_r$ and let $i = (i_1, i_2, \ldots,
i_r)$ be a multiindex. The multilinear form $a$ then
defines a rank $r$ tensor given by
\begin{equation}
  A_i = a(\varphi_{i_1}^1, \varphi_{i_2}^2, \ldots, \varphi_{i_r}^r).
\end{equation}
In the case of a bilinear form, the tensor $A$ is a matrix (the
stiffness matrix), and in the case of a linear form, the tensor $A$ is
a vector (the load vector).

To compute the tensor $A$ by assembly, we need to compute the
\emph{element tensor} $A^e$ on each element $e$ of the triangulation
$\mathcal{T}$ of $\Omega$ \cite{logg:submit:04}. Let
$\{\varphi^{e,1}_i\}_{i=1}^{n_1}$ be the restriction to $e$ of the
subset of $\{\varphi_i^1\}_{i=1}^{M_1}$ supported on $e$ and define
the local bases on $e$ for $V_2, \ldots, V_r$ similarly. The rank $r$
element tensor $A^e$ is then given by
\begin{equation}
  A^e_i = a_e(\varphi_{i_1}^{e,1}, \varphi_{i_2}^{e,2}, \ldots, \varphi_{i_r}^{e,r}).
\end{equation}

\subsection{Evaluation by tensor representation}
\label{sec:representation}

The element tensor $A^e$ can be efficiently computed by representing
$A^e$ as a special tensor product. If the multilinear form~$a$ is
given by an integral over the domain $\Omega$, then each entry
$A^e_i$ of $A^e$ is given by an integral over the element
$e$. By a change of variables and a series of linear transformations,
we may rewrite this integral as an integral over a reference element
$E$. In particular, when the map from the reference element $E$ is
affine, the linear transformations of derivatives can be moved outside
of the integral to obtain a representation of the element tensor $A^e$
as a tensor product of a constant tensor $A^0$ and a tensor $G_e$ that
varies over the set of elements,
\begin{equation} \label{eq:tensorproduct}
  A^e_{i} = A^0_{i\alpha} G_e^{\alpha},
\end{equation}
or more generally a sum $A^e_{i} = A^{0,k}_{i\alpha} G_{e,k}^{\alpha}$
of such tensor products, where $i$ and $\alpha$ are multiindices and
we use the convention that repetition of an index means summation over
that index. We refer to $A^0$ as the \emph{reference tensor} and to
$G_e$ as the \emph{geometric tensor}.  The rank of the reference tensor
is the sum of the rank $r = |i|$ of the element tensor and the rank
$|\alpha|$ of the geometric tensor $G_e$. As we shall see, the rank of
the geometric tensor depends on the specific form.

In \cite{logg:submit:04}, we present an algorithm that computes the
tensor representation (\ref{eq:tensorproduct}) for fairly general
multilinear forms. This algorithm forms the foundation for the FEniCS
Form Compiler, FFC \cite{logg:www:04}.

As an example, we consider here the tensor representation of the
element tensor $A^e$ for Poisson's equation $-\Delta u(x) = f(x)$ with
homogeneous Dirichlet boundary conditions on a domain $\Omega$.  The
bilinear form $a$ is here given by $a(v, u) = \int_{\Omega} \nabla
v(x) \cdot \nabla u(x) \dx$ and the linear form $L$ is given by $L(v)
= \int_{\Omega} v(x) f(x) \dx$. By a change of variables using an
affine map $F_e : E \rightarrow e$, we obtain the following
representation of the element tensor $A^e$:
\begin{equation}
  \begin{split}
    A^e_i &= \int_{e} \nabla \varphi^{e,1}_{i_1}(x) \cdot \nabla \varphi^{e,2}_{i_2}(x) \dx \\
    &= \det F_e' 
    \frac{\partial X_{\alpha_1}}{\partial x_{\beta}}
    \frac{\partial X_{\alpha_2}}{\partial x_{\beta}}
    \int_{E}
    \frac{\partial \Phi^1_{i_1}(X)}{\partial X_{\alpha_1}}
    \frac{\partial \Phi^2_{i_2}(X)}{\partial X_{\alpha_2}} \dX
    = A^0_{i\alpha} G_e^{\alpha},
  \end{split}
\label{eq:laptensorrep}
\end{equation}
where
$A^0_{i\alpha} = \int_{E}
\frac{\partial \Phi^1_{i_1}(X)}{\partial X_{\alpha_1}}
\frac{\partial \Phi^2_{i_2}(X)}{\partial X_{\alpha_2}} \dX$,
$G_e^{\alpha} = \det F_e'
\frac{\partial X_{\alpha_1}}{\partial x_{\beta}}
\frac{\partial X_{\alpha_2}}{\partial x_{\beta}}$,
and $\Phi^j_{i_j} = \varphi^{e,j}_{i_j} \circ F_e$ for $j = 1,2$.
We see that the reference tensor $A^0$ is here a rank four tensor and
the geometric tensor $G_e$ is a rank two tensor.
In~\cite{logg:article:07}, we saw that dependencies frequently occur
between $A^0_i$ and $A^0_j$ for many multiindices $i,j$ that can reduce the overall
cost of computation.

\section{Optimizing stiffness matrix evaluation}
\label{optimize}

\subsection{An abstract optimization problem}
Tensors of arbitrary rank may be rewritten as vectors, with contraction
becoming the Euclidean inner product.  So, we will just deal with vectors
and inner products to formulate our abstract optimization problem.
We let $Y = \{y^i \}_{i=1}^n$ be a collection of $n$ vectors in $\R^m$. In the most
general case, this is a collection rather than a set, as some of the
items in $Y$ may be identical.  Corresponding to $Y$, we must find a
process for computing for arbitrary $g \in \R^m$ the collection of
items $ \{ \left(y^i\right)^t g \}_{i=1}^n$.  Throughout this paper,
we will measure the cost of this as the total number of multiply-add
pairs (MAPs) required to complete all the dot products.  This cost is
always bounded by $nm$, but we hope to improve on that. This could be
alternatively formalized as building an abstract control-flow graph
for performing the dot products that is equivalent to the naive
process but contains a minimal number of nodes.  Our techniques,
however, rely on structure that is not apparent to traditional
optimizing compilers, so we prefer the present formulation.

We seek out ways of optimizing the local matrix evaluation that rely
on notions of distance between a few of the underlying vectors.  The
Euclidean metric is not helpful here; we focus on other, discrete
measures of distance such that if $y$ and $z$ are close together, then
$y^t g$ is easy to compute once $z^t g$ is known (and vice versa).
Many of the dependencies we considered in~\cite{logg:article:07} were
between pairs of vectors --- equality, colinearity, and Hamming
distance.  Here, we develop a theory for optimizing the evaluation of
finite element matrices under binary relations between the members of
the collection.  This starts by introducing some notions of distance
on the collection of vectors and finds an optimized computation with
respect to those notions by means of a minimum spanning tree.

\subsection{Complexity-reducing relations}
\label{complexityreducing}

\begin{definition}
Let $\rho : Y \times Y \rightarrow [0,m]$ be symmetric.  We say that
$\rho$ is {\em complexity-reducing} if for every $y,z \in Y$ with
$\rho(y,z) \leq k < m$, $y^t g$ may be computed using the result $z^t g$
in no more than $k$ multiply-add pairs.
\end{definition}

\begin{example}
Let $e^+(y,z) = d (1 - \delta_{y,z})$, where $\delta_{y,z}$ is the
standard Kronecker delta.  Then, $e^+$ is seen to be
complexity-reducing, for if $e^+(y,z) = 0$, then $y^t g = z^t g$ for
all $g \in \R^m$ and hence the former requires no arithmetic once the
latter is known.  Similarly, we can let $e^-(u,v) = e^+(u,-v)$, for if
$u = -v$, then computing $u^t g$ from $v^t g$ requires only a sign
flip and no further floating point operations.
\end{example}

\begin{example}
Let
\begin{equation}
c(y,z) = \left\{
\begin{array}{ll}
0, & y = z \\
1, & y = \alpha z \, \mathrm{for \, some} \, \alpha \in \R, \alpha \neq 0,1\\
m, & {\text otherwise}
\end{array}
\right.
\end{equation}
Then $c$ is
complexity-reducing, for $y^t g = (\alpha z)^t g = \alpha ( z^t g )$,
so $y^t g$ may be computed with one additional floating point
operation once $z^t g$ is known.
\label{ex:colinear}
\end{example}

\begin{example}
Let $H^+(y,z)$ be the Hamming distance, the number entries in which
$y$ and $z$ differ. Then $H^+$ is complexity-reducing.  If
$H^+(y,z) = k$, then $y$ and $z$ differ in $k$ entries, so the
difference $y-z$ has only $k$ nonzero entries.  Hence, $(y-z)^t g$
costs $k$ multiply-add pairs to compute, and we may write $y^t g =
(y-z)^t g + z^t g$.  By the same argument, we can let $H^-(y,z) =
H^+(y,-z).$
\label{ex:hamming}
\end{example}

\begin{theorem}
Let $\rho_1$ and $\rho_2$ be complexity-reducing relations.  Define
\begin{equation}
\rho(y,z) = \min(\rho_1(y,z),\rho_2(y,z)).
\end{equation}
Then $\rho$ is a complexity-reducing relation.
\end{theorem}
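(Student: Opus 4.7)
The plan is to unwind the definition of \emph{complexity-reducing} and observe that the minimum simply lets us pick, for each pair $(y,z)$, whichever of the two relations gives the cheaper computation. So the proof should be essentially a one-liner by cases; the only thing to be careful about is verifying that $\rho$ satisfies the formal prerequisites (symmetry, range in $[0,m]$) and that the hypothesis $k < m$ propagates correctly to the chosen $\rho_i$.

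First I would check that $\rho$ is well-defined as a map $Y \times Y \to [0,m]$: since $\rho_1, \rho_2 \in [0,m]$ pointwise, the minimum is also in $[0,m]$. Symmetry of $\rho$ is immediate from symmetry of $\rho_1$ and $\rho_2$.

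Next I would verify the complexity-reducing condition. Fix $y, z \in Y$ and suppose $\rho(y,z) \leq k < m$. By definition of the minimum, either $\rho_1(y,z) \leq k$ or $\rho_2(y,z) \leq k$ (or both). Without loss of generality suppose $\rho_1(y,z) \leq k$; since we also have $k < m$, the hypothesis of the complexity-reducing property for $\rho_1$ is met with the same $k$, and hence $y^t g$ can be computed from $z^t g$ in at most $k$ multiply-add pairs. This is exactly the required conclusion for $\rho$.

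There is no substantive obstacle here; the theorem is a tautology once one notices that ``complexity-reducing'' asserts the existence of a cheap computation from $z^t g$ to $y^t g$, and taking minima only strengthens that existence statement. The one subtlety worth flagging explicitly in the proof is the strict inequality $k < m$: it must be inherited by whichever $\rho_i$ realizes the minimum, which is automatic because the bound $\rho_i(y,z) \leq k < m$ holds with the same $k$.
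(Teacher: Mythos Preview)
Your proposal is correct and follows essentially the same approach as the paper's proof: choose whichever $\rho_i$ realizes the minimum and invoke its complexity-reducing property for the given pair $(y,z)$. Your version is slightly more careful in explicitly checking symmetry, the range $[0,m]$, and the strict inequality $k<m$, all of which the paper's proof leaves implicit, but the core argument is identical.
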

\begin{proof}
Pick $y,z \in Y$, let $1 \leq i \leq 2$ be such that $\rho(y,z) =
\rho_i(y,z)$ and let $\rho_i(y,z) \equiv k$.  But $\rho_i$ is a
complexity-reducing relation, so for any $g \in \R^m$, $y^t g$ may be
computed in no more than $k = \rho(y,z)$ multiply-add pairs.  Hence
$\rho$ is complexity-reducing.
\end{proof}

This simple result means that we may consider any finite collection of
complexity-reducing relations (e.g. colinearity together with Hamming
distance) as if they were a single relation for the purpose of finding
an optimized computation in the later discussion.

\begin{definition}
If $\rho$ is a complexity-reducing relation defined as the minimum
over a finite set of complexity-reducing relations, we say that it is
{\em composite}.  If it is not, we say that it is {\em simple}.
\end{definition}

To see that not all complexity-reducing relations are metrics, it is
easy to find a composite complexity-reducing that violates the triangle inequality.
Let $\rho(y,z) = \min( H^+(y,z) , c(y,z) )$.  It is not hard to show that
$H^+$ and $c$ are both metrics.  If we take $y = (1,2,2)^t$ and
$z=(0,4,4)^t$, then $\rho(y,z) = 3$ since the vectors are neither
colinear nor share any common entries.  However, if we let $x =
(0,2,2)^t$, then $\rho(y,x) = 1$ since the vectors share two entries
and $\rho(x,z) = 1$ by colinearity.  Hence, $\rho(y,z) > \rho(y,x) +
\rho(x,z)$, and the triangle inequality fails.

\begin{remark}
It may be possible to enrich the collection of vectors with {\em Steiner points} to reduce the overall cost of the computation.  For example, if the collection of vectors $\{ (1,2,2)^t , (0,4,4)^t \}$ above were enriched with the additional vector $(0,2,2)^t$, the cost of the dot products with arbitrary $g$ would be reduced from six multiply-add pairs (three for each vector) to five (three for the first vector, then one to compute the dot products for each of the other two), even though there are more overall vector dot products to perform.
\end{remark}

\subsection{Finding an optimized computation}
Having defined complexity-reducing relations and given several
examples, we now show how they may be used to determine an optimized
evaluation of the stiffness matrix.  We shall work in the context of a
single complexity-reducing relation $\rho$, which may be composite.

In order to compute $\{(y^i)^t g\}_{i=1}^n$, we would like to pick
some $y^i \in Y$ and compute $(y^i)^t g$.  Then, we want to pick some
$y^j$ that is very close to $y^i$ under $\rho$ and then compute
$(y^j)^t g$.  Then, we pick some $y^k$ close to either
$y^i$ or $y^j$ and compute that dot product.  So, this process
continues until all the dot products have been computed.  Moreover,
since the vectors $Y$ depend only on the variational form and finite
element space and not the mesh or parameters, it is possible to do
this search once offline and generate low-level code that will exploit
these relations.  We first formalize the notion of finding the optimal
computation and then how the code may be generated.

We introduce a weighted, undirected graph $G=(Y,E)$ where $Y$ is our
collection of vectors defined above.  Our graph is completely
connected; that is, every pair of vectors $y^i,y^j$ are connected by
an edge.  The weight of this edge is defined to be $\rho(y^i,y^j)$.
We may think of walking along the edge from $y^i$ to $y^j$ as using
the dot product $(y^i)^t g$ to compute $(y^j)^t g$.  If $\rho$ is
composite, it will be helpful to know later which underlying relation
gave the minimum value used for $\rho$.  So, suppose that $\rho(y,z) =
\min \{ \rho_i(y,z) \}_{i=1}^R$.  For any fixed $y,z$, let
$\varrho(y,z)$ be a in integer in $[1,R]$ such that
$\rho_{\varrho(y,z)}(y,z) = \rho(y,z)$.  In addition to weights, we
thus associate with each edge $\{ y^i,y^j \}$ the entity
$\varrho(y^i,y^j)$.

A standard graph-theoretic object called a {\em minimum spanning tree}
is exactly what we need~\cite{Lev03}.  A spanning tree, which we shall
denote $(Y,E^\prime)$ is a subgraph that satisfies certain properties.
First, it contains all of the $n$ nodes of the original graph.
Second, $(Y,E^\prime)$ is connected.  Finally, $E^\prime$ has $n-1$
edges, so that there are no cycles (thus it is a tree).  Now, there
are possibly many spanning trees for a given graph.  Every spanning
tree has a {\em weight} associated with it that is the sum of the
weights of its edges.  A {\em minimum} spanning tree is a spanning
tree such that the sum of the edge weights is as small as possible.
Minimum spanning tree algorithms start with a particular node of the
graph, called the {\em root}.  Regardless of which root is chosen,
minimum spanning tree algorithms will generate trees with exactly the
same sum of edge weights.

While technically a minimum spanning tree is undirected, we can think
of it as being a directed graph with all edges going away from the
root.  Such a notion tells us how to compute all of the dot products
with minimal operations with respect to $\rho$.  We start with the
root node, which we assume is $y^0$, and compute $(y^0)^t g$.  Then,
for each of the children of $y^0$ in the tree, we compute the dot
products with $g$ using the result of $(y^0)^t g$.  Then, we use the
dot products of the children to compute the dot products of each of
the children's children, and so on. This is just a breadth-first
traversal of the tree. A depth-first traversal of the tree would also generate a correct algorithm, but it would likely require more motion of the computed results in and out of registers at run-time.

The total number of multiply-add pairs in this process is $m$ for
computing $(y^0)^t g$ plus the sum of the edge weights of
$(Y,E^\prime)$.  As the sum of edge weights is as small as it can be,
we have a minimum-cost algorithm for computing $\{(y^i)^t g\}_{i=1}^n$
with respect to $\rho$ for any $g \in \R^m$.  On the other hand, it is
not a true optimal cost as one could find a better $\rho$ or else use
relations between more than two vectors (say three coplanar vectors).
One other variation is in the choice of root vector.  If, for
example some $y^i$ has several elements that are zero, then it can be
dotted with $g$ with fewer than $m$ multiply add pairs.  Hence, we
pick some $\bar{y} \in Y$ such that the number of nonzero entries is
minimal to be the root.  We summarize these results in a theorem:

\begin{theorem}
Let $G=(Y,E)$ be defined as above and let $g \in \R^m$ be arbitrary.  The
total number of multiply-add pairs needed to compute $\{ (y^i)^t g\}_{i=1}^n$
is no greater than $m^\prime+w$, where $m^\prime$ is the minimum
number of nonzero entries of a member of $Y$ and $w$ is the weight of
a minimum spanning tree of $G$.
\end{theorem}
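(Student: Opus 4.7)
The plan is to exhibit a concrete algorithm whose cost is bounded by $m^\prime+w$, matching the claim. The algorithm is the one already suggested informally in the text preceding the theorem: choose as root the vector $\bar{y} \in Y$ with the fewest nonzero entries, compute $\bar{y}^t g$ directly, and then propagate along a minimum spanning tree $(Y,E^\prime)$ of $G$ rooted at $\bar{y}$, using at each edge the complexity-reducing relation $\rho$ to bound the marginal cost.

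I would carry out the argument as follows. First, I compute $\bar{y}^t g$ directly, which requires at most $m^\prime$ multiply-add pairs since only the nonzero entries of $\bar{y}$ contribute. Next, I perform a traversal of $(Y,E^\prime)$ starting at $\bar{y}$ (breadth-first, say) and process edges in the order visited. The key invariant maintained by the traversal is that whenever I am about to process an edge $\{y^i,y^j\} \in E^\prime$, one of its endpoints—say $y^i$—has already had $(y^i)^t g$ computed. By definition of a complexity-reducing relation, the value $(y^j)^t g$ can then be obtained from $(y^i)^t g$ in at most $\rho(y^i,y^j)$ additional multiply-add pairs. This is exactly the weight of the edge.

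Summing the cost over the algorithm yields $m^\prime$ for the root plus $\sum_{\{y^i,y^j\} \in E^\prime} \rho(y^i,y^j)$ for the remaining $n-1$ vectors. Since $(Y,E^\prime)$ is a spanning tree, every vector in $Y$ is visited exactly once, so every dot product is computed. Because $E^\prime$ was chosen to be a minimum spanning tree, the edge-weight sum equals $w$ by definition, giving the total bound $m^\prime + w$.

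The main subtlety, and essentially the only thing one needs to be careful about, is ensuring that the traversal order is compatible with the inductive use of the complexity-reducing property: $(y^j)^t g$ must be produced from the already-known value of an adjacent dot product, never from one that has not yet been computed. Any rooted traversal (BFS or DFS) of the tree away from $\bar{y}$ satisfies this because each non-root node has a unique parent that is visited earlier. No further properties of $\rho$—in particular, no triangle inequality or metric structure—are required, which is important since the text has already shown $\rho$ may fail to be a metric.
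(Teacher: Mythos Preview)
Your proposal is correct and follows essentially the same approach as the paper: the paper's argument (given in the paragraphs immediately preceding the theorem, which it then ``summarizes'') likewise chooses as root a vector $\bar{y}$ with the fewest nonzero entries, computes $\bar{y}^t g$ in $m'$ multiply-add pairs, and then traverses the minimum spanning tree breadth-first, invoking the complexity-reducing relation along each edge to accumulate total cost $m'+w$. Your added remarks on the traversal invariant and the irrelevance of the triangle inequality are accurate elaborations but do not depart from the paper's line of reasoning.
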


The overhead of walking through the tree at runtime would likely
outweigh the benefits of reducing the floating point cost.  We can
instead traverse the tree and generate low-level code for computing
all of the dot products - this function takes as an argument the
vector $g$ and computes all of the dot products of $Y$ with $g$.  An
example of such code was presented in~\cite{logg:article:07}.

\subsection{Comparison to spectral elements}
Our approach is remarkably different than the spectral element method.
In spectral element methods, one typically works with tensor products of
Lagrange polynomials over logically rectangular domains.  Efficient
algorithms for evaluating the stiffness matrix or its action follow
naturally by working dimension-by-dimension.  While such
decompositions are possible for unstructured shapes~\cite{KarShe99},
these are restricted to specialized polynomial bases.  On the other
hand, our approach is blind both to the element shape and kind of
approximating spaces used.  While spectral element techniques may
ultimately prove more effective when available, our approach will
enable some level of optimization in more general cases, such as
Raviart-Thomas-Nedelec~\cite{RavTho77a,RavTho77b,Ned80,Ned86} elements
on tetrahedra.

\section{Experimental results}
\label{experiment}
Here, we show that this optimization technique is successful at
generating low-flop algorithms for computing the element stiffness
matrices associated with some standard variational forms.  First, we
consider the bilinear forms for the Laplacian and advection in one
coordinate direction for tetrahedra.  Second, we study the trilinear
form for the weighted Laplacian.  In all cases, we generated the
element tensors using FFC, the FEniCS Form
Compiler~\cite{logg:submit:04,logg:www:04}, which in turn relies on
FIAT~\cite{Kir04,Kir05} to generate the finite element basis functions
and integration rules.  Throughout this section, we let $d=2,3$ refer
to the spatial dimension of $\Omega$.

\subsection{Laplacian}
We consider first the standard Laplacian operator
\begin{equation}
a(v,u) = \int_\Omega \nabla v(x) \cdot \nabla u(x) \dx.
\end{equation}
We gave a tensor product representation of the local stiffness matrix in
equation~(\ref{eq:laptensorrep}).  The indices of the local stiffness
matrix and the associated tensors are shown in
Table~\ref{table:Alaptriquadraticnosymm}.

\begin{table}
\caption{Element matrix indices and associated tensors (the slice
$A^0_{i\cdot}$ for each fixed index~$i$) displayed as vectors for the
Laplacian on triangles with quadratic basis functions.  All vectors
are scaled by six so they appear as integers.}
\begin{center}
\small
\begin{tabular}{ccc}
\begin{tabular}{|c|rrrr|} \hline
index & \multicolumn{4}{c|}{vector} \\ \hline\hline
(0, 0) & 3 & 3 & 3 & 3 \\ \hline
(0, 1) & 1 & 0 & 1 & 0 \\ \hline
(0, 2) & 0 & 1 & 0 & 1 \\ \hline
(0, 3) & 0 & 0 & 0 & 0 \\ \hline
(0, 4) & 0 & -4 & 0 & -4 \\ \hline
(0, 5) & -4 & 0 & -4 & 0 \\ \hline
(1, 0) & 1 & 1 & 0 & 0 \\ \hline
(1, 1) & 3 & 0 & 0 & 0 \\ \hline
(1, 2) & 0 & -1 & 0 & 0 \\ \hline
(1, 3) & 0 & 4 & 0 & 0 \\ \hline
(1, 4) & 0 & 0 & 0 & 0 \\ \hline
(1, 5) & -4 & -4 & 0 & 0 \\ \hline
\end{tabular}
&
\begin{tabular}{|c|rrrr|} \hline
index & \multicolumn{4}{c|}{vector} \\ \hline\hline
(2, 0) & 0 & 0 & 1 & 1 \\ \hline
(2, 1) & 0 & 0 & -1 & 0 \\ \hline
(2, 2) & 0 & 0 & 0 & 3 \\ \hline
(2, 3) & 0 & 0 & 4 & 0 \\ \hline
(2, 4) & 0 & 0 & -4 & -4 \\ \hline
(2, 5) & 0 & 0 & 0 & 0 \\ \hline
(3, 0) & 0 & 0 & 0 & 0 \\ \hline
(3, 1) & 0 & 0 & 4 & 0 \\ \hline
(3, 2) & 0 & 4 & 0 & 0 \\ \hline
(3, 3) & 8 & 4 & 4 & 8 \\ \hline
(3, 4) & -8 & -4 & -4 & 0 \\ \hline
(3, 5) & 0 & -4 & -4 & -8 \\ \hline
\end{tabular}
&
\begin{tabular}{|c|rrrr|} \hline
index & \multicolumn{4}{c|}{vector} \\ \hline\hline
(4, 0) & 0 & 0 & -4 & -4 \\ \hline
(4, 1) & 0 & 0 & 0 & 0 \\ \hline
(4, 2) & 0 & -4 & 0 & -4 \\ \hline
(4, 3) & -8 & -4 & -4 & 0 \\ \hline
(4, 4) & 8 & 4 & 4 & 8 \\ \hline
(4, 5) & 0 & 4 & 4 & 0 \\ \hline
(5, 0) & -4 & -4 & 0 & 0 \\ \hline
(5, 1) & -4 & 0 & -4 & 0 \\ \hline
(5, 2) & 0 & 0 & 0 & 0 \\ \hline
(5, 3) & 0 & -4 & -4 & -8 \\ \hline
(5, 4) & 0 & 4 & 4 & 0 \\ \hline
(5, 5) & 8 & 4 & 4 & 8
\\ \hline
\end{tabular}
\end{tabular}
\end{center}
\label{table:Alaptriquadraticnosymm}
\end{table}

\begin{table}
\caption{Element matrix indices and associated tensors (the slice
$A^0_{i\cdot}$ for each fixed index~$i$) for the Laplacian on
triangles with quadratic basis functions after transformation to make
use of symmetry.}
\begin{center}
\small
\begin{tabular}{ccc}
\begin{tabular}{|c|rrr|} \hline
index & \multicolumn{3}{c|}{vector} \\ \hline\hline
(0, 0) & 3 & 6 & 3 \\ \hline
(0, 1) & 1 & 1 & 0 \\ \hline
(0, 2) & 0 & 1 & 1 \\ \hline
(0, 3) & 0 & 0 & 0 \\ \hline
(0, 4) & 0 & -4 & -4 \\ \hline
(0, 5) & -4 & -4 & 0 \\ \hline
(1, 1) & 3 & 0 & 0 \\ \hline
(1, 2) & 0 & -1 & 0 \\ \hline
(1, 3) & 0 & 4 & 0 \\ \hline
(1, 4) & 0 & 0 & 0 \\ \hline
(1, 5) & -4 & -4 & 0 \\ \hline
\end{tabular}
&
\begin{tabular}{|c|rrr|} \hline
index & \multicolumn{3}{c|}{vector} \\ \hline\hline
(2, 2) & 0 & 0 & 3 \\ \hline
(2, 3) & 0 & 4 & 0 \\ \hline
(2, 4) & 0 & -4 & -4 \\ \hline
(2, 5) & 0 & 0 & 0 \\ \hline
(3, 3) & 8 & 8 & 8 \\ \hline
(3, 4) & -8 & -8 & 0 \\ \hline
(3, 5) & 0 & -8 & -8 \\ \hline
(4, 4) & 8 & 8 & 8 \\ \hline
(4, 5) & 0 & 8 & 0 \\ \hline
(5, 5) & 8 & 8 & 8 \\ \hline
\end{tabular}
\end{tabular}
\end{center}
\label{table:Alaptriquadraticsymm}
\end{table}

Because the element stiffness matrix is symmetric, we only need to
build the triangular part.  Even without any optimization techniques,
this naturally leads from computing $|P|^2$ contractions at a cost of
$d^2$ multiply-add pairs each to ${|P| + 1 \choose 2}$ contractions,
where $|P|$ is the dimension of the polynomial space $P$.
Beyond this, symmetry opens up a further opportunity for optimization.
For every element $e$, $G_e$ is symmetric.  The equality of its
off-diagonal entries means that the contraction can be performed in
${d + 1 \choose 2}$ rather than $d^2$ entries.  This is readily
illustrated in the two-dimensional case.  We contract a symmetric $2
\times 2$ tensor $G$ with an arbitrary $2 \times 2$ tensor $K$:
\begin{equation}
\begin{split}
G : K & =
\left( \begin{array}{cc}
G_{11} & G_{12} \\
G_{12} & G_{22}
\end{array} \right)
:
\left( \begin{array}{cc}
K_{11} &  K_{12} \\
K_{21} & K_{22}
\end{array} \right) \\
& = G_{11} K_{11} + G_{12} (K_{12} + K_{21}) + G_{22} K_{22} \\
& = \tilde{G}^t \hat{K},
\end{split}
\end{equation}
where $\tilde{G}^t = (G_{11}, G_{12}, G_{22})$ and
$\hat{K}^t = (K_{11}, K_{12} + K_{21}, K_{22})$.

This simple calculation implies a linear transformation $\hat{\cdot}$
from $\R^{d \times d}$ into $\R^{{d + 1 \choose 2}}$ obtained by
taking the diagonal entries of the matrix together with the sum of the
off-diagonal entries, that may be applied to each reference tensor, together
with an associated mapping $\tilde{\cdot}$ on symmetric tensors that
just takes the symmetric part and casts it as a vector.  Hence, the
overall cost of computing an element stiffness matrix before
optimizations goes from $|P|^2 d^2$ to $ {{|P| + 1 \choose 2}}{{ d + 1
\choose 2} }$.

An interesting property of this transformation of the reference tensor
is that it is {\em contractive} for the complexity-reducing relations
we consider.  The Hamming distance between two items under
$\hat{\cdot}$ is bounded by the Hamming distance between the items.
More precisely, $\rho(\hat{y},\hat{z}) \leq \rho(y,z)$.  Furthermore,
if items are colinear before transformation, their images will be as
well.  So, for the optimizations we consider, we will not destroy
any dependencies.  Moreover, the transformation may introduce
additional dependencies.  For example, before applying the
transformation, entries (0,1) and (1,5) are not closely related by
Hamming distance or colinearity, as seen in
Table~\ref{table:Alaptriquadraticnosymm}.  However, after the
transformation, we see that the same items in
Table~\ref{table:Alaptriquadraticsymm} are colinear.  Other examples
can be found readily.

We optimized the evaluation of the Laplacian for Lagrange finite
elements of degrees one through three on triangles and tetrahedra
using a composite complexity-reducing relation with $H^+$, $H^-$, and
$c$ defined in Examples~\ref{ex:colinear} and~\ref{ex:hamming}.  We
performed the optimization both with and without symmetry. The
results are shown in Tables~\ref{table:lapnosymm}~and~\ref{table:lapsymm}.
Figure~\ref{figure:mst} shows a diagram
of the minimum spanning tree computed by our code for the Laplacian on
quadratic elements using symmetry.  Each node of the graph is labeled
with a pair $(i,j)$ indicating the matrix entry (the vectors
themselves are displayed in Table~\ref{table:Alaptriquadraticsymm}), and the
edges are labeled with the associated weights.  Simple inspection
reveals that the sum of the edge weights is 14, which when added to 3
to compute the dot product for the root node, agrees with the entry
for quadratics in Table~\ref{table:lapsymm}.

\begin{figure}
\centerline{\includegraphics[width=\textwidth]{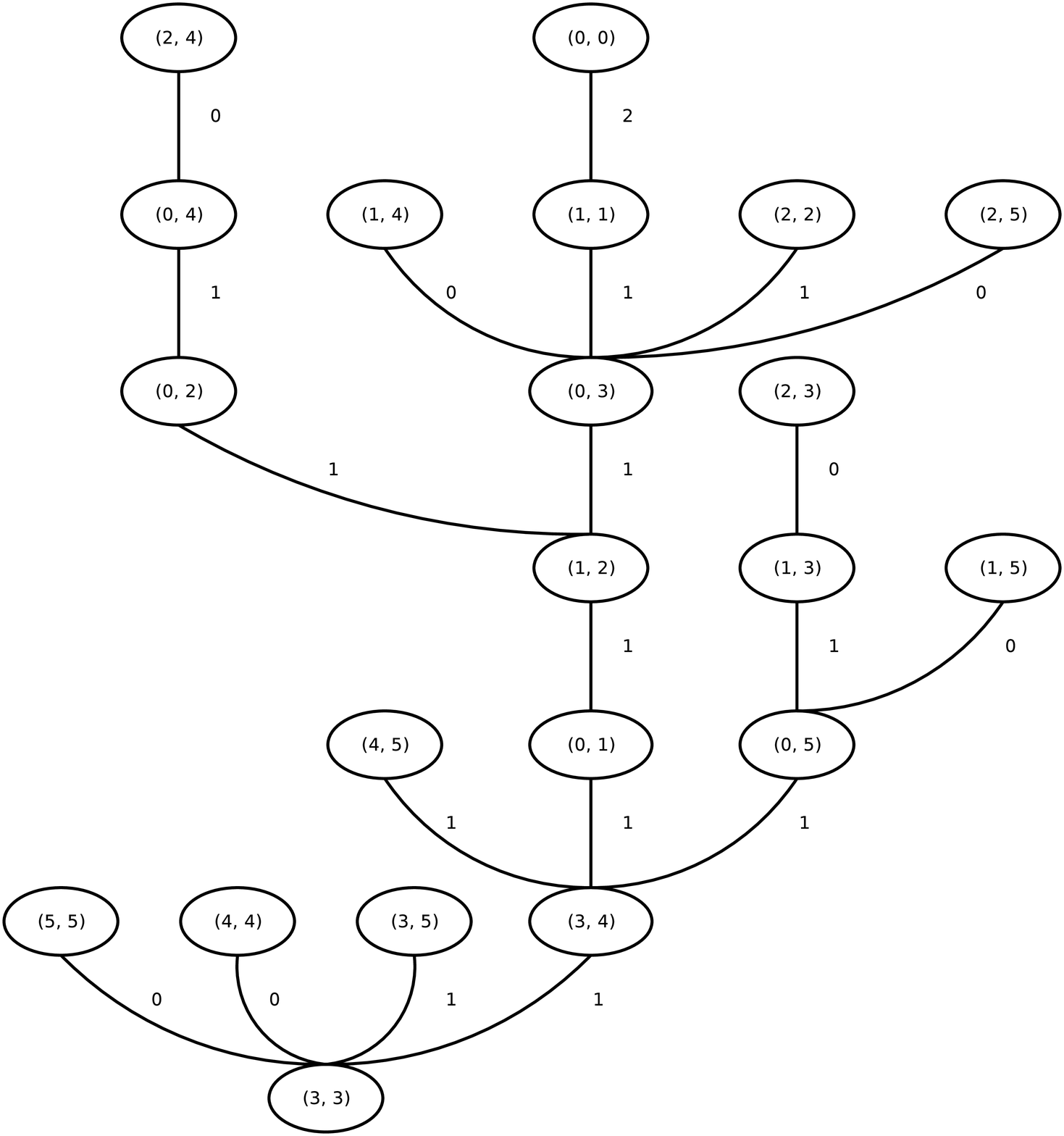}}
\caption{Minimum spanning tree for optimized computation of the Laplacian using quadratic elements on triangles.  The node labeled (3,3) is the root, and the flow is from bottom to top.}
\label{figure:mst}
\end{figure}

These techniques are successful in reducing the flop count, down to
less than one operation per entry on triangles and less than two on
tetrahedra for quadratic and cubic elements.  We showed
in~\cite{logg:article:07} for low degree elements on triangles that
going from standard numerical quadrature to tensor contractions led to
a significant reduction in actual run-time for matrix assembly.  From
the tensor contractions, we got another good speedup by simply
omitting multiplication by zeros.  From this, we only gained a modest
additional speedup by using our additional optimizations.  However,
this is most likely due to the relative costs of memory access and
floating point operations.  We have to load the geometric tensor from
memory and we have to write every entry of the matrix to memory.
So then, $n+m$ in the tables gives a lower bound on memory access for
computing the stiffness matrix.  Our optimizations lead to algorithms
for which there are a comparable number of arithmetic and memory
operations.  Hence, our optimization has succeeded in reducing the
cost of computing the local stiffness matrix to a small increment to
the cost of writing it to memory.

\begin{table}
\caption{Number of multiply-add pairs in the optimized algorithm for
computing the Laplacian element stiffness matrix on triangles and tetrahedra for
Lagrange polynomials of degree one through three without using
symmetry.}
\begin{center}
\begin{tabular}{c|c}
triangles & tetrahedra \\
\hline \\
\begin{tabular}{|c|ccc|c|}
\hline
degree & $n$ & $m$ & $nm$ & MAPs \\ \hline
1 & 9 & 4 & 36 & 13\\
2 & 36 & 4 & 144 & 25\\
3 & 100 & 4 & 400 & 74\\
\hline
\end{tabular}
&
\begin{tabular}{|c|ccc|c|}
\hline
degree & $n$ & $m$ & $nm$ & MAPs \\ \hline
1 & 16 & 9 & 144 & 43\\
2 & 100 & 9 & 900 & 205\\
3 & 400 & 9 & 3600 & 864 \\
\hline
\end{tabular}
\end{tabular}
\end{center}
\label{table:lapnosymm}
\end{table}

\begin{table}
\caption{Number of multiply-add pairs in the optimized algorithm for
computing the Laplacian element stiffness matrix on triangles and tetrahedra for
Lagrange polynomials of degree one through three using symmetry.}
\begin{center}
\begin{tabular}{c|c}
triangles & tetrahedra \\
\hline \\
\begin{tabular}{|c|ccc|c|}
\hline
degree & $n$ & $m$ & $nm$ & MAPs \\ \hline
1 & 6 & 3 & 18 & 9\\
2 & 21 & 3 & 63 & 17\\
3 & 55 & 3 & 165 & 46\\
\hline
\end{tabular}
&
\begin{tabular}{|c|ccc|c|}
\hline
degree & $n$ & $m$ & $nm$ & MAPs \\ \hline
1 & 10 & 6 & 60 & 27\\
2 & 55 & 6 & 330 & 101\\
3 & 210 & 6 & 1260 & 370\\
\hline
\end{tabular}
\end{tabular}
\end{center}
\label{table:lapsymm}
\end{table}

\subsection{Advection in one coordinate direction}
Now, we consider constant coefficient advection aligned with a
coordinate direction
\begin{equation}
a(v,u) = \int_{\Omega} v(x) \frac{\partial u(x)}{\partial x_1} \dx.
\end{equation}

This is part of the operator associated with constant coefficient
advection in some arbitrary direction --- optimizing the other
coordinate directions would give similar results.  These results are
shown in Table~\ref{table:xad}. Again, our optimization generates
algorithms for which the predominant cost of computing the element
matrix is writing it down, as there is significantly fewer than one
floating point cycle per matrix entry in every case.

\begin{table}
\caption{Number of multiply-add pairs in the optimized algorithm for
computing the coordinate-advection element stiffness matrix on
triangles and tetrahedra for Lagrange polynomials of degree one through three.}
\begin{center}
\begin{tabular}{c|c}
triangles & tetrahedra \\
\hline\\
\begin{tabular}{|c|ccc|c|}
\hline
degree & $n$ & $m$ & $nm$ & MAPs \\ \hline
1 & 9 & 2 & 18 & 4\\
2 & 36 & 2 & 72 & 22\\
3 & 100 & 2 & 200 & 59\\
\hline
\end{tabular}
&
\begin{tabular}{|c|ccc|c|}
\hline
degree & $n$ & $m$ & $nm$ & MAPs \\ \hline
1 & 16 & 3 & 48 & 9\\
2 & 100 & 3 & 300 & 35\\
3 & 400 & 3 & 1200 & 189\\
\hline
\end{tabular}
\end{tabular}
\end{center}
\label{table:xad}
\end{table}

\subsection{Weighted Laplacian}
Our final operator is the variable coefficient Laplacian:
\begin{equation}
a_w(v,u) = \int_{\Omega} w(x) \nabla v(x) \cdot \nabla u(x) \dx.
\end{equation}

This form may be viewed as a trilinear form $a(w,v,u)$ in which $w$ is
the projection of the coefficient into the finite element space.  For
many problems, this can be performed without a loss in order of
convergence.  For nonlinear problems, we will have to reassemble this
form at each nonlinear iteration, so it is an important step to
optimize.  The reference tensor is
\begin{equation}
A^0_{i\alpha} = \int_{E} \Phi_{\alpha_1}(X) \frac{\partial \Phi_{i_1}(X)}{\partial X_{\alpha_2}}
\frac{\partial \Phi_{i_2}(X)}{\partial X_{\alpha_3}} \dX
\end{equation}
and the geometric tensor is
\begin{equation}
  G_e^{\alpha} = w_{\alpha_1} \det F_e'
  \frac{\partial X_{\alpha_2}}{\partial x_{\beta}}
  \frac{\partial X_{\alpha_3}}{\partial x_{\beta}}.
\end{equation}
Note that the geometric tensor is the outer product of the geometric tensor
for the constant coefficient Laplacian, which we shall denote
$(G^L)_e$, with the coefficients of the weight function.  Unlike the
constant coefficient case, the amount of arithmetic per entry in the
element stiffness matrix grows with the polynomial degree.

We could simply proceed with the optimization as we did for other
forms --- the element tensor is just a collection of vectors that will
have to be dotted into $G_e$ for each element in the mesh, but now the
dot products are more expensive since the vectors are longer.  In this
case, $G_e$ must be explicitly formed for each element (this costs
$|P| d^2$ once $(G^L)_e$ is formed).  On the other hand, we could use
the decomposition of $G_e$ into $(G^L)_e$
and the coefficient vector $w_k$ and do the contractions in stages.
For example, we could organize the contraction as
\begin{equation}
  A_i^e = \left( A^0_{i,(\alpha_1,\alpha_2,\alpha_3)} \left( G^L \right)_e^{(\alpha_2,\alpha_3)} \right)
  w_{\alpha_1},
\end{equation}
that is, for each of the $|P|^2$ entries of the stiffness matrix, we
compute $|P|$ contractions of $d \times d$ tensors with $(G^L)_e$.
This is a similar optimization problem as the Laplacian, but with
$|P|$ times more vectors to optimize over.  After we do this set of
computations, we must compute $|P|^2$ dot products with the
coefficient vector $w_k$.  Note that the contractions with $(G^L)_e$
may be optimized, but the resulting vectors to dot with $w_k$ will not
be known until run-time and must be computed at full cost.  Hence, a
lower bound for this approach is $|P|$ multiply-add pairs per entry
(assuming that the contractions with $G^L$ were absolutely free).  On
the other hand, one could contract with the coefficient first to give
an array of $|P|^2$ tensors of size $d \times d$, and then contract
each of these with $(G^L)_e$.  As before, the first step can be
optimized, but the second step cannot.

In any of these cases, we may use the same transformations to exploit
symmetry as we did in the constant coefficient case.  Since
$G_e^{\alpha_1,\alpha_2,\alpha_3} = G_e^{\alpha_1,\alpha_3,\alpha_2}$,
we may view each slice $A^0_{i\cdot}$ of the reference tensor as an
array of $|P|$ tensors of size $d \times d$ and apply the
transformation to each of these.  If we fully form $G_e$, this reduces
the cost from $|P| d^2$ to $|P| {d+1 \choose 2}$.  In all of our
experiments, we made use of this.

In Table~\ref{table:wlapwhole}, we see the cost of computing the
weighted Laplacian by the first approach (optimizing directly the
tensor product $A_i^e = A_{i\alpha}^0 G_e^{\alpha}$). While the
optimizations are not as successful as for the constant coefficient
operators, we still get reductions of 30\%-50\% in the operation
counts.

When we perform the contraction in stages, we find more dependencies
(for example, the slices of two of the tensors could be colinear
although the entire tensors are not).  We show the cost of performing
the optimized stage for contracting with $(G^L)_e$ first in
Table~\ref{table:wlapgl} and for contracting with $w_k$ first in
Table~\ref{table:wlapcoeff}.

In order to get a fair comparison between these approaches, we must
factor in the additional costs of building $G_e$ or performing the
second stage of contraction.  Once $(G^L)_e$ is built and symmetrized,
it costs an additional $|P|{d+1 \choose 2}$ multiply-add pairs to
construct $G_e$.  If we optimize the computation of contracting with
$(G^L)_e$ first, we do not have to build $G_e$, but we must perform a
dot product with $w_k$ for each entry of the matrix.  This costs $|P|$
per contraction with ${|P|+1 \choose 2}$ entries in the matrix.  If we
optimize the contraction with each $w_k$ first, then we have an
additional ${|P|+1 \choose 2}$ contractions with $(G^L)_e$ at a cost
of ${d+1 \choose 2}$ each.  We expect that which of these will be most
effective must be determined (automatically) on a case-by-case
basis. Table~\ref{table:wlapcompare} shows the comparisons for the first
approach (labeled $G_e$), the second approach (labeled $(G^L)_e$) and
the third approach (labeled $w_k$) by indicating the cost of the
optimized computation plus the additional stages of computation.  In
most of these cases, contracting with the coefficient first leads to the
lowest total cost.

\begin{table}
\caption{Number of multiply-add pairs in the optimized algorithm for
computing the weighted Laplacian element stiffness matrix on
triangles and tetrahedra for Lagrange polynomials of degree one through three using
symmetry.}
\begin{center}
\begin{tabular}{c|c}
triangles & tetrahedra \\
\hline\\
\begin{tabular}{|c|ccc|c|}
\hline
degree & $n$ & $m$ & $nm$ & MAPs \\ \hline
1 & 6 & 9 & 54 & 27\\
2 & 21 & 18 & 378 & 218\\
3 & 55 & 30 & 1650 & 1110\\
\hline
\end{tabular}
&
\begin{tabular}{|c|ccc|c|}
\hline
degree & $n$ & $m$ & $nm$ & MAPs \\ \hline
1 & 10 & 24 & 240 & 108\\
2 & 55 & 60 & 3300 & 1650\\
3 & 210 & 120 & 25200 & 14334\\
\hline
\end{tabular}
\end{tabular}
\end{center}
\label{table:wlapwhole}
\end{table}

\begin{table}
\caption{Number of multiply-add pairs in the optimized algorithm for
performing all of the contractions with $(G^L)_e$ in the weighted
Laplacian on triangles and tetrahedra first, resulting in ${|P|+1 \choose 2}$ arrays
of length $|P|$ to contract with $w_k$.}
\begin{center}
\begin{tabular}{c|c}
triangles & tetrahedra \\
\hline\\
\begin{tabular}{|c|ccc|c|}
\hline
degree & $n$ & $m$ & $nm$ & MAPs \\ \hline
1 & 18 & 3 & 54 & 9\\
2 & 126 & 3 & 378 & 115\\
3 & 550 & 3 & 1650 & 683\\
\hline
\end{tabular}
&
\begin{tabular}{|c|ccc|c|}
\hline
degree & $n$ & $m$ & $nm$ & MAPs \\ \hline
1 & 40 & 6 & 240 & 27\\
2 & 550 & 6 & 3300 & 693\\
3 & 4200 & 6 & 25200 & 7021\\
\hline
\end{tabular}
\end{tabular}
\end{center}
\label{table:wlapgl}
\end{table}

\begin{table}
\caption{Number of multiply-add pairs in the optimized algorithm for
performing all of the contractions with the coefficient in the
weighted Laplacian on triangles and tetrahedra first, resulting in ${|P|+1 \choose
2}$ arrays of length ${d+1\choose 2} = 6$ to contract with $(G^L)_e$.}
\begin{center}
\begin{tabular}{c|c}
triangles & tetrahedra \\
\hline\\
\begin{tabular}{|c|ccc|c|}
\hline
degree & $n$ & $m$ & $nm$ & MAPs \\ \hline
1 & 18 & 3 & 54 & 7\\
2 & 63 & 6 & 378 & 138\\
3 & 165 & 10 & 1650 & 899\\
\hline
\end{tabular}
&
\begin{tabular}{|c|ccc|c|}
\hline
degree & $n$ & $m$ & $nm$ & MAPs \\ \hline
1 & 60 & 4 & 240 & 9\\
2 & 330 & 10 & 3300 & 465\\
3 & 1260 & 20 & 25200 & 7728\\
\hline
\end{tabular}
\end{tabular}
\end{center}
\label{table:wlapcoeff}
\end{table}

\begin{table}
\caption{Comparing the total number of multiply-add pairs for
fully forming $G_e$, contracting with $(G^L)_e$ first, and contracting
with $w_k$ first on both triangles and tetrahedra.}
\begin{center}
\begin{tabular}{c|c}
triangles & tetrahedra \\ \hline
\\
\begin{tabular}{|c|c|c|c|}\hline
\multicolumn{4}{|c|}{$G_e$} \\ \hline
degree & MST & additional & total \\ \hline
1 & 27 & 3*3 & 38 \\
2 & 218 & 3*6 & 236 \\
3 & 1110 & 3*10 & 1140 \\ \hline
\end{tabular}
&
\begin{tabular}{|c|c|c|c|}\hline
\multicolumn{4}{|c|}{$G_e$} \\ \hline
degree & MST & additional & total \\ \hline
1 & 108 & 6*4 & 132 \\
2 & 1650 & 6*10 & 1710 \\
3 & 14334  & 6*20 & 14454 \\ \hline
\end{tabular} \\
\\
\begin{tabular}{|c|c|c|c|}\hline
\multicolumn{4}{|c|}{$(G^L)_e$ first} \\ \hline
degree & MST & additional & total \\ \hline
1 & 9 & 6*3 & 27\\
2 & 115 & 21*6  & 241\\
3 & 683 & 55*10 & 1233 \\ \hline
\end{tabular}
&
\begin{tabular}{|c|c|c|c|}\hline
\multicolumn{4}{|c|}{$(G^L)_e$ first} \\ \hline
degree & MST & additional & total \\ \hline
1 & 27 & 10*4 & 67 \\
2 & 693 & 55*10 & 1234 \\
3 & \ 7021\ & 210*20 & 11221 \\ \hline
\end{tabular} \\
\\
\begin{tabular}{|c|c|c|c|}\hline
\multicolumn{4}{|c|}{$w_k$ first} \\ \hline
degree & MST & additional & total \\ \hline
1 & 7 & 6*3  & 25 \\
2 & 138 & 21*3 & 201 \\
3 & 899 & 55*3 & 1064 \\ \hline
\end{tabular}
&
\begin{tabular}{|c|c|c|c|}\hline
\multicolumn{4}{|c|}{$w_k$ first} \\ \hline
degree & MST & additional & total \\ \hline
1 & 9 & 10*6 & 69 \\
2 & 465 & 55*6 & 795\\
3 & \ 7728 \ & 210*6 & 8988 \\ \hline
\end{tabular}
\end{tabular}
\end{center}
\label{table:wlapcompare}
\end{table}

\section{Optimizing the optimization process}
\label{efficient}
Since our graph $(Y,E)$ is completely connected, we have
$|E|=O(|Y|^2)$ and our optimization process requires complexity that
is at least quadratic in the number of entries in the element
stiffness matrix.  In this section, we show how certain useful
complexity-reducing relations may be evaluated over all of $Y$ in
better than quadratic time, then discuss how we may build a sparse
graph $(Y,E^\prime)$ with $|E|=O(|Y|)$ that will admit a much more
efficient optimization process.  Even though this process must be run
only once per form and element (say the Laplacian with quadratics on
triangles), the quadratic algorithm can become very time consuming and
challenge a single computer's resources for forms of high arity using
high degree polynomials on tetrahedra.

\subsection{Search algorithms}
Before discussing how we may evaluate some of these
complexity-reducing relations over the collection $Y$ in better than
quadratic time, we first describe some basic notation we will use for
hash tables throughout this section and the next.

Hash tables are standard data structures~\cite{Lev03} that associate
each member of a set of {\em keys} to some value, possibly drawn from
another set of objects.  The important point about hash tables is that
the basic operations of setting and getting values are expected to be
independent of the number of entries in the table (expected constant
time access).  Many higher-level programming languages have library
support or built-in features supporting hash tables (many
implementations of the standard template library in C++ include hash
tables, and scripting languages such as Python and Perl have them
built in as primitive types).

We begin by establishing some notation for the basic operations we
use.  If $a$ is a key of table $T$, then we find the value associated
with $a$ by $T[a]$.  If there is no value associated with $a$ (that
is, if $a$ is not a key of $T$, we may update $T$ by adding a key $a$
associated with value $b$ by the notation $T[a] \leftarrow b$.  We use
the same notation to indicate setting a new value to an existing key.

As before, we label the vectors $y^i$ for $1 \leq i \leq n $.  We want
to partition the labels into a set of subsets $\mathcal E$ such that
for each $E \in \mathcal E$, the vectors associated with each label in
$E$ are equal.  Moreover, if two vectors are equal, then their labels
must belong to the same $E$.  This is easily accomplished by setting
up a hash table whose keys are vectors and whose values are subsets of
the integers $1 \leq i \leq n$.  This process is described in
Algorithm~\ref{alg:equalhash}
\begin{algorithm}
\begin{algorithmic}
\STATE $E$ an empty table mapping vectors to subsets of $\{i\}_{i=1}^n$.
\FORALL{$1 \leq i \leq n$}
\IF{ $y^i$ is a key of $E$ }
  \STATE $E[y^i] \leftarrow E[v^i] \cup \{ i \}$
\ELSIF{ $-y^i$ is a key of $E$ }
    \STATE $E[-y^i] \leftarrow E[-y^i] \cup \{ i \}$
\ELSE
  \STATE $E[y^i] \leftarrow \{ i \}$
\ENDIF
\ENDFOR
\end{algorithmic}
\caption{Determining equality among vectors}
\label{alg:equalhash}
\end{algorithm}

Floating point arithmetic presents a slight challenge to hashing.
Numbers which are close together (within some tolerance) that should
be treated as equal must be rounded to so that they are indeed equal.
Hashing relies on a function that maps items into a set of integers
(the "hash code").  These functions are discontinuous and sensitive to
small perturbations.  For most numerical algorithms in floating point
arithmetic, we may define equality to be "near equality", but hash
tables require us to round or use rational arithmetic before any
comparisons are made.  We have successfully implemented our algorithms
in both cases.

As an alternative to hashing, one could form a binary search tree or
sort the vectors by a lexicographic ordering.  These would rely on a
more standard "close to equal" comparison operation, but only run in
$O(mn \log{(mn)})$ time.  So, for large enough data sets, hashing will
be more efficient.

We may similarly partition the labels into a set of subsets $\mathcal
C$ such that for each $C \in \mathcal C$, the vectors associated with
the labels in $C$ are colinear.  Similarly, if two vectors are
colinear, then their labels must belong to the same $C$.  This process
may be performed by constructing the collection of unit vectors
$\hat{A}$, with $\hat{y}_i = \frac{y_i}{\| y_i \|}$ for each $1 \leq i
\leq n$ for some norm $\| \cdot \|$ and using
Algorithm~\ref{alg:equalhash} on $\hat{A}$.

Finding vectors that are close together in Hamming distance is more
subtle.  At worst, the cost is $O(mn^2)$, as we have to compare every
entry of every pair of vectors.  However, we may do this in expected
linear time with some assumptions about $Y$.  We first describe the
algorithm, then state the conditions under which the algorithm
performs in worse than linear time.

Our vectors each have $m$ components.  We start by forming $m$ empty
hash tables.  Each $H_i$ will map numbers that appear in the
$i^\mathrm{th}$ position of any vector to the labels of vectors that
have that entry in that position.  This is presented in
Algorithm~\ref{alg:hammingtable}, in which $y^i_j$ denotes the
$j^\mathrm{th}$ entry of $y^i$.

\begin{algorithm}
\begin{algorithmic}
\FORALL{$1 \leq i \leq d$}
  \STATE $H_i$ an empty table mapping numbers to
  sets of vector labels from $\{i\}_{i=1}^n$
\ENDFOR
\FORALL{$1 \leq i \leq n$}
  \FORALL{$1 \leq j \leq m$}
    \IF{$y^i_j$ is a key of $H_j$}
      \STATE $H_j[ y^i_j ] \leftarrow H_j[y^i_j] \cup \{ i \}$
    \ELSE
      \STATE $H_j[ y^i_j ] := \{ i \}$
    \ENDIF
  \ENDFOR
  \ENDFOR
\end{algorithmic}
\caption{Mapping unique entries at each position to vectors containing them}
\label{alg:hammingtable}
\end{algorithm}

This process runs in expected $O(nm)$ time.  From these tables, we can
construct a table that gives the Hamming distance between any two
vectors, as seen in Algorithm~\ref{alg:hammingdistance}.  This
algorithm counts down from $d$ each time it discovers an entry that
two vectors share.  Our algorithm reflects the symmetry of the Hamming
distance.
\begin{algorithm}
\begin{algorithmic}
\STATE $D$ an empty table
\FORALL {$1 \leq i \leq n$}
  \STATE $D[i]$ an empty table
\ENDFOR
\FORALL{$1 \leq i \leq m$}
  \FORALL{$a$ in the keys of $H_i$}
    \FORALL{unique combinations $k,\ell$ of labels in $H_i[a]$}
      \STATE $\alpha := \min(k,\ell), \beta := \max(k,\ell)$
      \IF{$D[\alpha]$ has a key $\beta$}
        \STATE $D[\alpha][\beta] \leftarrow D[\alpha][\beta] - 1$
      \ELSE
        \STATE $D[\alpha][\beta] := m-1$
      \ENDIF
    \ENDFOR
  \ENDFOR
\ENDFOR
\end{algorithmic}
\caption{Computing Hamming distances efficiently}
\label{alg:hammingdistance}
\end{algorithm}

On output, for $1 \leq i < j \leq n$, if $D[i][j]$ has no entry, the
distance between $v^i$ and $v_j$ is $m$.  Otherwise, the table
contains the Hamming distance between the two vectors.  

Regarding complexity, there is a double loop over the entries of each
$H_i$.  Hence, the algorithm is quadratic in the maximum number of
vectors that share the same entry at the same location.  Presumably,
this is considerably less than $n$ on most data sets.

\subsection{Using a sparse graph}
If we create a graph $(Y,\tilde{E})$ with significantly fewer edges than $(Y,E)$, we may be able to get most of the reduction in operation count while having a more efficient optimization process.  For example, we might choose to put a cutoff on $\rho$, only using edges that have a large enough complexity-reduction.  So, we can define the set of edges being
\begin{equation}
E_k = \{ \{ y^i , y^j \} : \rho(y^i,y^j) \leq k \}
\end{equation}

For example, if we use Algorithms~\ref{alg:hammingtable}
and~\ref{alg:hammingdistance} to evaluate $H^+$ over all pairs from
$Y$, then we are using $\rho = H^+$ and $k=m-1$.  Also, note that our
structure $D$ encodes a sparse graph.  That is, the vectors of $Y$ are
the nodes of the graph, the neighbors of each $y^i$ are simply the
keys of $D[y^i]$, and the edge weight between some $y^i$ and neighbor
$y^j$ is $D[y^i][j^j]$. It is not hard to move from this table into
whatever data structure is used for graphs.

Then, we could add colinearity or other complexity-reducing relations
to this graph.  If we use Algorithm~\ref{alg:equalhash} on the unit
vectors to determine sets of colinear vectors, we can update the the
graph by either adding edges or updating the edge weights for each
pair of colinear vectors.

If $|E_k| = O(|Y|)$, then computing a minimum spanning tree will
require only $O(n \log n)$ time rather than $O(n^2 \log n)$.  However,
there is no guarantee that $(Y,E_k)$ will be a connected graph.  Some
vectors might not have close neighbors, or else some subgraphs do not
connect with each other.  An optimized algorithm can still be obtained
by finding the connected components of the $(Y,E_k)$ and finding a
minimum spanning tree for each component.  Then, the total cost of the
computation is $m$ times the number of connected components plus the
sum of the weights of each of the minimum spanning trees.

\section{Conclusion and ongoing work}
\label{conc}
We have developed a general optimization strategy for the evaluation
of local stiffness matrices for finite elements.  This is based on
first formulating the computation as a sequence of tensor
contractions, then introducing a new concept of complexity-reducing
relations that allows us to set the optimization in a graph context.
The optimization itself proceeds by computing a minimum spanning tree.
These techniques worked very well at reducing the cost of evaluating
finite element matrices for several forms using Lagrange elements of
degrees one through three on triangles and tetrahedra.  Finally, we
discussed some efficient algorithms for detecting equality and
colinearity and for evaluating the pairwise Hamming distance over the
entire set of tensors.

We hope to extend our optimizations in several ways.
First, we remarked that it may be possible to enrich the collection of tensors
in such as way as to deliever a minimum spanning tree with lower
total weight.  We hope to explore such a process in the future.  Also,
we saw in~\cite{logg:article:07} that frequently, some of the tensors
are linear combinations of two or more other tensors.  However,
both locating and making use of such relations in a more formal
context has been difficult.  We are working on geometric search
algorithms to locate linear dependencies efficiently.  However, once
they are located, our optimization process must occur over a
hypergraph rather than a graph.  Finding a true minimum is also much
more difficult, and we are working on heuristics that will allow us to
combine these two approaches.

Finally, we plan to integrate our optimization strategy with FFC.
While FFC currently generates very efficient code for evaluating
variational forms, we will improve upon this generated code by piping
the tensors through our optimization process before generating code to
perform the contractions.  This will lead to a domain-specific
optimizing compiler for finite elements; by exploiting latent
mathematical structure, we will automatically generate more efficient
algorithms for finite elements than people write by hand.

\bibliographystyle{siam}
\bibliography{bibliography}

\begin{thebibliography}{10}

\bibitem{www:Analysa}
{\sc B.~Bagheri and R.~Scott}, {\em Analysa}.
\newblock \url{http://people.cs.uchicago.edu/~ridg/al/aa.html}.

\bibitem{www:deal}
{\sc W.~Bangerth, R.~Hartmann, and G.~Kanschat}, {\em {\tt deal.{I}{I}}
  {D}ifferential {E}quations {A}nalysis {L}ibrary}.
\newblock \url{http://www.dealii.org}.

\bibitem{BreSco94}
{\sc S.~C. Brenner and L.~R. Scott}, {\em The Mathematical Theory of Finite
  Element Methods}, Springer-Verlag, 1994.

\bibitem{Cia76}
{\sc P.~G. Ciarlet}, {\em Numerical Analysis of the Finite Element Method}, Les
  Presses de l'Universite de Montreal, 1976.

\bibitem{www:GetDP}
{\sc P.~Dular and C.~Geuzaine}, {\em Get{DP}: a {G}eneral environment for the
  treatment of {D}iscrete {P}roblems}.
\newblock \url{http://www.geuz.org/getdp/}.

\bibitem{logg:www:01}
{\sc J.~Hoffman, J.~Jansson, and A.~Logg}, {\em {DOLFIN}}.
\newblock \url{http://www.fenics.org/dolfin/}.

\bibitem{logg:preprint:06}
{\sc J.~Hoffman and A.~Logg}, {\em {DOLFIN}: {D}ynamic {O}bject oriented
  {L}ibrary for {FIN}ite element computation}, Tech. Report 2002--06, Chalmers
  Finite Element Center Preprint Series, 2002.

\bibitem{KarShe99}
{\sc G.~E. Karniadakis and S.~J. Sherwin}, {\em Spectral/{$hp$} element methods
  for {CFD}}, Numerical Mathematics and Scientific Computation, Oxford
  University Press, New York, 1999.

\bibitem{Kir04}
{\sc R.~C. Kirby}, {\em {FIAT}: A new paradigm for computing finite element
  basis functions}, ACM Trans. Math. Software, 30 (2004), pp.~502--516.

\bibitem{Kir05}
\leavevmode\vrule height 2pt depth -1.6pt width 23pt, {\em Optimizing {FIAT}
  with the level 3 {BLAS}}, submitted to ACM Trans. Math. Software,  (2005).

\bibitem{logg:article:07}
{\sc R.~C. Kirby, M.~Knepley, A.~Logg, and L.~R. Scott}, {\em Optimizing the
  evaluation of finite element matrices}, To appear in SIAM J. Sci. Comput.,
  (2005).

\bibitem{KirKne05}
{\sc R.~C. Kirby, M.~Knepley, and L.~R. Scott}, {\em Evaluation of the action
  of finite element operators}, submitted to BIT,  (2005).

\bibitem{logg:submit:04}
{\sc R.~C. Kirby and A.~Logg}, {\em A compiler for variational forms}.
\newblock submitted to {ACM} Trans. Math. Softw., 2005.

\bibitem{Lev03}
{\sc A.~Levitin}, {\em Introduction to the Design and Analysis of Algorithms},
  Addison-Wesley, 2003.

\bibitem{logg:www:04}
{\sc A.~Logg}, {\em The {FE}ni{CS} {F}orm {C}ompiler {FFC}}.
\newblock \url{http://www.fenics.org/ffc/}.

\bibitem{www:Sundance}
{\sc K.~Long}, {\em Sundance}.
\newblock \url{http://csmr.ca.sandia.gov/~krlong/sundance.html}.

\bibitem{Lon03}
\leavevmode\vrule height 2pt depth -1.6pt width 23pt, {\em Sundance, a rapid
  prototyping tool for parallel {PDE}-constrained optimization}, in Large-Scale
  PDE-Constrained Optimization, Lecture notes in computational science and
  engineering, Springer-Verlag, 2003.

\bibitem{Ned80}
{\sc J.-C. N{\'e}d{\'e}lec}, {\em Mixed finite elements in {${\bf R}\sp{3}$}},
  Numer. Math., 35 (1980), pp.~315--341.

\bibitem{Ned86}
\leavevmode\vrule height 2pt depth -1.6pt width 23pt, {\em A new family of
  mixed finite elements in {${\bf R}\sp 3$}}, Numer. Math., 50 (1986),
  pp.~57--81.

\bibitem{www:FreeFEM}
{\sc O.~Pironneau, F.~Hecht, and A.~L. Hyaric}, {\em Free{FEM}}.
\newblock \url{http://www.freefem.org/}.

\bibitem{RavTho77a}
{\sc P.-A. Raviart and J.~M. Thomas}, {\em A mixed finite element method for
  2nd order elliptic problems}, in Mathematical aspects of finite element
  methods (Proc. Conf., Consiglio Naz. delle Ricerche (C.N.R.), Rome, 1975),
  Springer, Berlin, 1977, pp.~292--315. Lecture Notes in Math., Vol. 606.

\bibitem{RavTho77b}
\leavevmode\vrule height 2pt depth -1.6pt width 23pt, {\em Primal hybrid finite
  element methods for {$2$}nd order elliptic equations}, Math. Comp., 31
  (1977), pp.~391--413.

\end{thebibliography}

\end{document}